\newtheorem{thm}{Theorem}[section]
\newtheorem{lem}{Lemma}[section]
\newtheorem{rem}{Remark}[section]
\theoremstyle{definition}
\theoremstyle{remark}
\numberwithin{equation}{section}
\begin{document}
\title[Lower Bound on Translative Covering Density of Octahedron]
{Lower Bound on Translative Covering \\Density of Octahedron}

\author{Yiming Li}
\address{
Center for Applied Mathematics \\
Tianjin University\\
Tianjin, 300072\\
P. R. China}
\email{xiaozhuang@tju.edu.cn}

\author{Yanlu Lian*}
\address{
School of Mathematics \\
Hangzhou Normal University\\
Hangzhou, 311121\\
P. R. China}
\email{yllian@hznu.edu.cn}

\author{Miao Fu}
\address{
Center for Applied Mathematics \\
Tianjin University\\
Tianjin, 300072\\
P. R. China}
\email{miaofu@tju.edu.cn}

\author{Yuqin Zhang}
\address{
School of Mathematics \\
Tianjin University\\
Tianjin, 300072\\
P. R. China}
\email{yuqinzhang@tju.edu.cn}

\thanks{*Corresponding author.}

\subjclass{52C17, 52B10, 52C07, 05C12}
\keywords{Translative covering density, octahedron, parallelehedron}
\begin{abstract}
In this paper, we present the first nontrivial lower bound on the translative covering density of octahedron. To this end, we show the lower bound, in any translative covering of octahedron, on the density relative to a given parallelehedron. The resulting lower bound on the translative covering density of octahedron is $1+6.6\times10^{-8}$.
\end{abstract}
\maketitle
\section{Introduction}
\quad \quad Packing and covering, as a classic subject in pure mathematics, has a long and rich history. In 1611, J. Kepler proposed his conjecture that \emph{the density of the densest ball packing should be} $\frac{\pi}{\sqrt{18}}$. In 1900, as parts of his 18th problem, D. Hilbert \cite{Hilbert} proposed the problems \emph{to determine the densest packing density of a geometric object such as a ball or a tetrahedron}. However, it was Minkowski who carried out a systematic study of packing and covering as the core of his Geometry of Numbers. During the course, many prominent mathematicians have made contributions to this subject. For example, Gauss, Lagrange, Dirichlet, Thue, Minkowski, Voronoi, Mahler, Davenport, Delone, Rogers, L. Fejes T\'{o}th, Hlawka et al.
It is worth mentioning that the sphere packing in $\mathbb{E}^8$ was recently resolved by Viazovska \cite{Viazovska} in a stunning breakthrough, and the method was then quickly extended to solve the problem in $\mathbb{E}^{24}$ \cite{Cohn}.
For more details on packings, we refer to \cite{Betke}, \cite{Brass}, \cite{G.Fejes}, \cite{Lagarias} and \cite{Zong.1}.

As a dual topic of packing, sphere covering has also been studied by many scholars. Let $\theta^c(B_n)$, $\theta^t(B_n)$ and $\theta^l(B_n)$ denote the densities of the thinnest congruent covering, the thinnest translative covering and the thinnest lattice covering of $\mathbb{E}^n$ with unit ball $B_n$, respectively. Clearly, $\theta^c(B_n)=\theta^t(B_n)$. The known exact results of $\theta^l(B_n)$ are summarized in the following table. As for the exact value of $\theta^t(B_n)$, our only knowledge is $\theta^t(B_2)=\theta^l(B_2)=\frac{2\pi}{\sqrt{27}}$, which was discovered by R. Kershner \cite{Kershner} in 1939.
In $\mathbb{E}^n$, from the works of many authors, in particular C. A. Rogers (see \cite{RogersB}), we know that
$\frac{n}{e\sqrt{e}}\ll \theta^t(B_n)\leq \theta^l(B_n)\leq c\cdot n(\log_e n)^{\frac{1}{2}\log_2 2\pi e}.$
\renewcommand{\arraystretch}{1.2}
\begin{table}[H]\label{ball}
\definecolor{mygray}{gray}{.9}
\definecolor{mypink}{rgb}{.99,.91,.95}
\definecolor{mycyan}{cmyk}{.3,0,0,0}
\centering
\label{table1}
\begin{tabular}{c c c c c}
\toprule
  $n$  & $\theta^l(\cdot)$ & \small{Author}   \\
\midrule
  \rowcolor{mygray}
  $2$ & $\frac{2\pi}{\sqrt27}$ & \small{Kershner 1939 \cite{Kershner}}   \\
  $3$ & $\frac{5\sqrt{5}\pi}{24}$ & \small{Bambah 1954 \cite{Bambah}, Barnes 1956 \cite{Barnes}, Few 1956 \cite{Few}} \\
  \rowcolor{mygray}
  $4$ & $\frac{2\pi^2}{5\sqrt{5}}$ & \small{Delone and Ry\u{s}kov 1963 \cite{Delone}}\\
  $5$ & $\frac{245\sqrt{35}\pi^2}{3888\sqrt{3}}$ &  \small{Ry\u{s}kov and Baranovskii 1975 \cite{Ryskov}}\\
\bottomrule
\end{tabular}
\end{table}

Sphere packing and covering is not only an important subject in pure mathematics, but also has significant applications in many other subjects, such as in coding theory: \emph{the Shortest Vector Problem and the Closest Vector Problem, as significant problems in lattice based cryptography, are closely related to the densest lattice packing density and the thinnest lattice covering density of the ball}. For more details, we refer to \cite{Micciancio}.
However, with the further development of science and technology, it is impossible for scientists to stay in the existing post-quantum cryptosystem, and they will certainly study and develop cryptosystems with higher security (complexity).
And we also know that, in addition to the $n$-dimensional ball, the most interesting convex bodies are probably the $n$-dimensional regular polytopes that exist in every dimension: the cube, the simplex, and the cross-polytope. The $n$-cube is a tile, so its covering density is 1. Now we will focus on the remaining two.

Let $T_n$ denote an $n$-dimensional regular simplex and let $K$ denote a convex body. Our knowledge about simplices coverings is comparatively complete. In $\mathbb{E}^2$, we have
$$
\theta^t(T_2)=\theta^l(T_2)=\frac{3}{2},
$$
where the value of $\theta^l(T_2)$ was determined by I. F\'{a}ry \cite{Fary} in 1950, however the value of $\theta^t(T_2)$ was proved only in 2010 by J. Januszewski \cite{Januszewski}. It is even more surprising that, up to now some basic covering problems in the plane are still open (see \cite{Zong.2}). For example, we do not know yet if $\theta^t(K)=\theta^l(K)$ holds for all convex domains.
In $\mathbb{E}^3$, we have
\begin{align}
\frac{2^{16}+1}{2^{16}}\leq\theta^l(T_3)\leq\theta^t(T_3)\leq\frac{125}{63},\nonumber
\end{align}
where the upper bound was discovered by C. M. Fiduccia, R. W. Forcade and J. S. Zito \cite{Fiduccia}, R. Dougherty and V. Faber \cite{Dougherty} and R. Forcade and J. Lamoreaux \cite{Forcade} in 1990s, and the lower bound was achieved by F. Xue and C. Zong \cite{Xue} in 2018 by studying the volumes of generalized difference bodies. In 2006, J. H. Conway and S. Torquato \cite{Conway} obtained
\begin{align}
\theta^c(T_3)\leq\frac{9}{8}\nonumber
\end{align}
by constructing a particular tetrahedron covering.
Recently, Y. Li, M. Fu and Y. Zhang \cite{Li} proved
\begin{align}
\theta^t(T_3)\geq 1+1.227\times 10^{-3}.\nonumber
\end{align}
In $\mathbb{E}^n$, from the works of R. P. Bambah, H. S. M. Coxeter, H. Davenport, P. Erd\H{o}s, L. Few, G. L. Watson, and especially C. A. Rogers \cite{RogersB}, we know that
\begin{gather}
\theta^l(K)\leq n^{\log_2\log_e n+c}, \notag\\
\theta^t(K)\leq n\log n+n\log \log n+5n.\notag
\end{gather}
Since the 1960s, progress in covering is very limited (see \cite{Brass}). In 2018, F. Xue and C. Zong \cite{Xue} obtained
$$
\theta^l(T_n)\geq 1+\frac{1}{2^{3n+7}}.
$$
In 2021, O. Ordentlich, O. Regev and B. Weiss \cite{Ordentlich} improved Rogers' upper bound to
\begin{align}
\theta^l(K)\leq cn^2, \nonumber
\end{align}
where $c$ is a suitable positive constant. Recently, M. Fu, F. Xue and C. Zong \cite{Fu} proved that
$$
\theta^l(T_n)>1+\frac{1}{n}-\frac{1}{(n-1)2^{n-1}}.
$$

As the unit ball of the $l_1$ space, the $n$-dimensional cross-polytope $C_n$ has important applications in coding theory: a code in $Z_m^n$ having minimum Lee distance $d$ can be thought of as a packing of discrete cross-polytopes of radius $\lfloor\frac{d-1}{2}\rfloor$ in the torus, see \cite{Golomb}. Nevertheless, our knowledge about it is still very limited, both in terms of packing and covering.
The only known exact value is that the lattice packing density of $C_3$ (octahedron) is $\frac{18}{19}$, which was proved by Minkowski \cite{Minkowski} in 1904. It is obvious that $\theta^l(C_3)\leq\frac{9}{8}$ because the truncated octahedron is a parallelohedron (proved by Fedorov \cite{Fedorov} in 1885).
It is rather surprising that the exact value of $\theta^l(C_3)$ is still unknown, and nothing nontrivial is known about $\theta^t(C_3)$.

In this paper, we prove the following result:
\begin{thm}\label{density}
If $C_3+X$ is a translative covering of $\mathbb{E}^3$, then its density is at least $1+\frac{4}{6^{10}}$. In other words, we have
\begin{align}
\theta^t(C_3)\geq1+\frac{4}{6^{10}}>1+6.6\times10^{-8}.\nonumber
\end{align}
\end{thm}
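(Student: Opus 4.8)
\emph{Proof strategy.} The plan is to adapt to the octahedron $C_3$ the ``relative density'' method that Li, Fu and Zhang \cite{Li} used for the tetrahedron. The covering density of $C_3+X$ is defined as a limit over an expanding family of regions, and by approximating those regions from inside and outside by unions of cells of a fixed parallelohedral tiling, one sees that it equals a Ces\`aro‑type average, over the cells, of purely local quantities. So I would first fix a suitably chosen parallelohedron $P$ with tiling lattice $\Lambda$ (the tiles $\{P+v:v\in\Lambda\}$ partitioning $\mathbb{E}^3$), and work with the local quantities
$$
\rho(v)=\frac{1}{\operatorname{vol}(P)}\sum_{x\in X}\operatorname{vol}((C_3+x)\cap(P+v)),\qquad v\in\Lambda .
$$
Since $C_3+X$ is a covering, each cell $P+v$ is the union of the pieces $(C_3+x)\cap(P+v)$, so $\rho(v)\ge 1$; and because $\{P+v\}$ tiles space, $\sum_{v}\operatorname{vol}((C_3+x)\cap(P+v))=\operatorname{vol}(C_3)$ for every $x$, which shows that the (Ces\`aro) average of $\rho(v)$ over $v\in\Lambda$ is exactly the covering density of $C_3+X$. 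Hence Theorem~\ref{density} reduces to showing that these cell quantities cannot average arbitrarily close to $1$, namely that their average exceeds $1+\tfrac{4}{6^{10}}$.

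The heart of the matter is then a finite, geometric assertion: a cell of the parallelohedral tiling cannot be covered by translates of $C_3$ so efficiently that $\operatorname{vol}(P)(\rho(v)-1)$ — the multiply‑covered volume charged to that cell, whose total over all cells is the excess we must bound below — is negligible on average. To make this quantitative I would (i) choose $P$ and $\Lambda$ adapted to the facet/vertex structure of $C_3$, using that $C_3$ has $8$ triangular facets and $6$ vertices and, crucially, that $C_3$ does not tile $\mathbb{E}^3$ by translations; (ii) enumerate the finitely many combinatorial types of configurations in which translates of $C_3$ can cover a cell, with special attention to the behaviour near the vertices and edges of $P+v$, where the ``pointiness'' of $C_3$ forces inefficiency; (iii) in each type, bound the forced overlap volume from below by an explicit positive constant, via a generalized difference‑body volume estimate in the spirit of Xue–Zong \cite{Xue}; and (iv) optimize over the choice of $P$ and track the constants through the case analysis to reach the value $\tfrac{4}{6^{10}}$, so that $\theta^t(C_3)\ge 1+\tfrac{4}{6^{10}}>1+6.6\times10^{-8}$.

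The main obstacle is steps (ii)–(iii): obtaining a lower bound on the forced overlap that is genuinely uniform over all cells and all coverings. The difficulty is that a cleverly designed covering can let the overlap ``leak'' out of a given cell into its neighbours, so that no individual cell need carry a fixed positive overlap; one must therefore amortize the overlap correctly over a cell together with its neighbours in the tiling — this is precisely why the estimate is phrased relative to a parallelohedron and why the particular choice of $P$ is delicate — and check that the resulting bound survives the Ces\`aro averaging. Keeping this bookkeeping honest, and extracting an explicit, non‑vacuous constant rather than a mere strict inequality ``$>1$'', is where essentially all the work lies; the reduction to the local setting and the concluding arithmetic are routine.
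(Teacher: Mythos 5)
There is a genuine gap. Your reduction to a density relative to a fixed parallelohedron is indeed the paper's framework (the paper takes a specific $P$ with $4C_3\subset P$ and $\operatorname{vol}(P)=1024$, defines $\theta(C_3,X,P)$ as in your $\rho(v)$, and uses $\theta^t(C_3)\ge\theta(C_3,P)$), but everything after that reduction — which you yourself acknowledge is ``where essentially all the work lies'' — is left as an outline, and the outline points in a direction that does not obviously work. Enumerating ``the finitely many combinatorial types of configurations in which translates of $C_3$ can cover a cell'' is not a finite task as stated: arbitrarily many translates may participate, and nothing in your sketch reduces this to a bounded list. Likewise, invoking ``a generalized difference-body volume estimate in the spirit of Xue--Zong'' is a gesture at the lattice-simplex technique, not an argument for translative coverings of the octahedron, and no explicit constant is extracted. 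As written, the proposal establishes only the (easy) identity that the covering density is the average of the cell densities $\rho(v)\ge1$; it does not prove the strict quantitative excess $\tfrac{4}{6^{10}}$.

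For comparison, the paper's mechanism for the hard step is quite different and concretely finite. Fix a translate $C_3+\mathbf{x}_{m+1}$ containing a chosen point and let $\mathbf{x}_1,\dots,\mathbf{x}_m$ index the translates meeting it; by a difference-body containment all of these lie in $3C_3+\mathbf{x}_{m+1}\subset 4C_3\subset P$, so all relevant overlap is captured inside one cell (this is how the paper defuses your ``leakage into neighbouring cells'' worry). Then it splits into two cases: if $m\ge 27$, simple volume counting of $m+1$ whole octahedra inside $P$ already forces density at least $1+\tfrac{1}{96}$; if $m\le 26$, a pigeonhole argument on the $z$-coordinate produces a horizontal slice on which every cross-section is a square of side length in $[\tfrac{\sqrt2}{27},\tfrac{53\sqrt2}{27}]$, the boundary of the central square must be covered by the at most $26$ other squares, and a short case analysis of the three ways one square can meet the boundary of another, integrated over neighbouring slices, yields a pairwise overlap volume of at least $\tfrac{4}{3^{10}}$, hence density at least $1+\tfrac{4}{3^{10}\cdot 1024}=1+\tfrac{4}{6^{10}}$. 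To turn your proposal into a proof you would need to supply an argument of this kind (or some other explicit, uniform overlap bound); at present the key inequality is asserted rather than proved.
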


\section{Preliminaries}
\quad \quad Define
\begin{align}\label{defineP}
P=conv\{&(8,0,0), (-8,0,0), (0,8,0), (0,-8,0),\\ \nonumber
& (0,0,8), (0,0,-8), (8,8,8),(-8,-8,-8)\}, \nonumber
\end{align}
which is a parallelohedron, i.e., a tile for $\mathbb{E}^3$.
Since $\theta^t(K)$ is invariant under non-singular linear transformations on $K$, we will work on the regular octahedron $C_3$ with vertices $(2,0,0)$, $(0, 2, 0)$, $(0, 0, 2)$, $(-2, 0, 0)$, $(0,-2,0)$, $(0,0,-2)$, edge length $2\sqrt{2}$ and volume ${32\over3}$. It is obvious that
\begin{align}\label{vol(P)}
4C_3\subset P \ \textrm{and}\ vol(P)=1024,
\end{align}
where $vol(\cdot)$ denotes the volume of a given convex body.

Define
\begin{align}
D(K)=\{\mathbf{x}-\mathbf{y}: \mathbf{x}, \mathbf{y}\in K\}. \nonumber
\end{align}
Usually, we call $D(K)$ the \emph{difference body} of $K$. Clearly $D(K)$ is a centrally symmetric convex body centered at the origin $\mathbf{o}$.
\begin{rem}
When $K$ is a centrally symmetric convex body centered at the origin $\mathbf{o}$, then $D(K)=2K$. Obviously, $D(C_3)=2C_3$.
\end{rem}

\begin{lem}\label{5T}
If $C_3\cap(C_3+\mathbf{x})\neq\varnothing$, then $C_3+\mathbf{x}\subset 3C_3$.
\end{lem}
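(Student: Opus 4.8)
The plan is to reduce everything to the $\ell_1$-description of the octahedron fixed in the Preliminaries. After that normalization, $C_3$ is precisely the ball $\{\mathbf{p}=(p_1,p_2,p_3)\in\mathbb{E}^3:\ |p_1|+|p_2|+|p_3|\le 2\}$. Writing $\|\mathbf{p}\|=|p_1|+|p_2|+|p_3|$ for the associated norm, we have $\lambda C_3=\{\mathbf{p}\in\mathbb{E}^3:\ \|\mathbf{p}\|\le 2\lambda\}$ for every $\lambda>0$; in particular $3C_3=\{\mathbf{p}:\ \|\mathbf{p}\|\le 6\}$. So the whole statement will follow from two applications of the triangle inequality for $\|\cdot\|$.

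First I would bound the translation vector. Assuming $C_3\cap(C_3+\mathbf{x})\neq\varnothing$, choose a common point $\mathbf{q}\in C_3\cap(C_3+\mathbf{x})$. Then $\mathbf{q}\in C_3$ and $\mathbf{q}-\mathbf{x}\in C_3$, so $\mathbf{x}=\mathbf{q}-(\mathbf{q}-\mathbf{x})\in D(C_3)$. By the Remark following the definition of the difference body, $D(C_3)=2C_3$, hence $\|\mathbf{x}\|\le 4$.

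Next I would propagate this to the entire translate. Any point of $C_3+\mathbf{x}$ is of the form $\mathbf{p}+\mathbf{x}$ with $\mathbf{p}\in C_3$, i.e. $\|\mathbf{p}\|\le 2$. The triangle inequality then gives $\|\mathbf{p}+\mathbf{x}\|\le\|\mathbf{p}\|+\|\mathbf{x}\|\le 2+4=6$, which means exactly $\mathbf{p}+\mathbf{x}\in 3C_3$. Therefore $C_3+\mathbf{x}\subset 3C_3$, which is the assertion.

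I do not expect a genuine obstacle here: the lemma is an immediate consequence of the triangle inequality together with the identity $D(C_3)=2C_3$, and the only thing to be careful about is the factor-of-two bookkeeping between $C_3$ and its difference body. It is perhaps worth remarking that the constant $3$ is best possible: taking $\mathbf{x}=(4,0,0)$, the octahedra $C_3$ and $C_3+\mathbf{x}$ meet only at the point $(2,0,0)$, yet $(6,0,0)\in C_3+\mathbf{x}$ lies on the boundary of $3C_3$, so no smaller dilate of $C_3$ would contain every such translate.
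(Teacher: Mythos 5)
Your proof is correct and follows essentially the same route as the paper: both arguments reduce to showing $\mathbf{x}\in D(C_3)=2C_3$ and then concluding via $C_3+2C_3=3C_3$, which in your version is just the triangle inequality for the $\ell_1$ gauge of $C_3$. The sharpness remark at the end is a nice addition but does not change the substance.
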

\begin{proof}

Since $C_3\cap(C_3+\mathbf{x})\neq\varnothing$, without loss of generality, suppose that $\mathbf{z}\in C_3\cap(C_3+\mathbf{x})$. Then, we have $\mathbf{z}\in C_3$ and $-\mathbf{x}\in C_3-\mathbf{z}$. By the symmetry of $C_3$, we have
\begin{align}
\mathbf{x}\in C_3+\mathbf{z}\subset D(C_3).\nonumber
\end{align}
Therefore, $C_3+\mathbf{x}\subset C_3+D(C_3)=3C_3$ if $C_3\cap(C_3+\mathbf{x})\neq\varnothing$, the lemma is proved.
\end{proof}

\begin{rem}\label{=D}
In fact, %by similar arguments it can be deduced that
%\begin{align}
%\bigcup(O+\mathbf{x})=E,\nonumber
%\end{align}
%where $O\cap(O+\mathbf{x})\neq\varnothing$. Then by Lemma \ref{D(2T)} and (\ref{vol(P)}),
By Lemma \ref{5T} and (\ref{vol(P)}), when $\mathbf{o}\in C_3+\mathbf{x}$ and $(C_3+\mathbf{x})\cap(C_3+\mathbf{y})\neq\varnothing$,
\begin{align}\label{relation}
\bigcup(C_3+\mathbf{y})\subset(3C_3+{\bf x})\subset 4C_3 \subset P,
\end{align}
\end{rem}

Let
\begin{align}
H_{z_0}=\{(x, y, z): z=z_0, x, y\in \mathbb{R}\}\nonumber
 \end{align}
be the $2$-dimensional subspace of $\mathbb{E}^3$ which is perpendicular to $z$-axis. Thus, by a routine argument we can deduce that
\begin{lem}\label{homothetic}
Suppose $\mathbf{x}=(x, y, z)$, then we have

(1) The $2$-dimensional cross section $H_{z_0}\cap (C_3+\mathbf{x})\neq \varnothing$ if and only if $z_0\in [-2+z, 2+z]$;

(2) $H_{z_0}\cap (C_3+\mathbf{x})$ is a square centered at $(x, y, z_0)$.
\end{lem}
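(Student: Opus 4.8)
The plan is to reduce both assertions to the coordinate description of the regular octahedron as an $\ell_1$-ball. First I would note that
\begin{align}
C_3=\{(a,b,c):|a|+|b|+|c|\le 2\},\nonumber
\end{align}
since the six listed vertices $(\pm2,0,0),(0,\pm2,0),(0,0,\pm2)$ are exactly the extreme points of this convex set, so its convex hull is the set itself. Translating by $\mathbf{x}=(x,y,z)$ then gives
\begin{align}
C_3+\mathbf{x}=\{(a,b,c):|a-x|+|b-y|+|c-z|\le 2\}.\nonumber
\end{align}

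Next I would intersect with the horizontal plane $H_{z_0}$ by setting $c=z_0$: a point $(a,b,z_0)$ lies in $C_3+\mathbf{x}$ if and only if $|a-x|+|b-y|\le 2-|z_0-z|$. For part (1), this is solvable in $(a,b)$ precisely when $2-|z_0-z|\ge 0$, that is, when $z_0\in[-2+z,2+z]$, which is exactly the claim. For part (2), writing $r=2-|z_0-z|\ge 0$, the solution set is the planar $\ell_1$-disk $\{(a,b):|a-x|+|b-y|\le r\}$; this is the square with vertices $(x\pm r,y,z_0)$ and $(x,y\pm r,z_0)$, i.e.\ a square with diagonals parallel to the $a$- and $b$-axes, centred at $(x,y,z_0)$. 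In particular all nonempty cross sections are positively homothetic squares (which explains the name of the lemma), while the degenerate case $r=0$ (i.e.\ $z_0=z\pm2$) gives the single point $(x,y,z_0)$.

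I do not expect any genuine obstacle here: once $C_3$ is rewritten as an $\ell_1$-ball, both assertions are immediate. The only points that deserve a word of care are the identification of $C_3$ with that $\ell_1$-ball and the treatment of the degenerate slices $z_0=z\pm2$, which is presumably why the authors announce this as a routine argument.
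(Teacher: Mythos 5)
Your proof is correct and supplies exactly the routine argument the paper alludes to without writing out: identifying $C_3$ with the $\ell_1$-ball $\{|a|+|b|+|c|\le 2\}$ and slicing at $c=z_0$ immediately yields both claims. Your added care about the degenerate slices $z_0=z\pm 2$ is consistent with the paper's later remark that the squares may have edge length zero.
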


\section{Translative coverings of octahedron}
\quad \quad To study $\theta^t(C_3)$, the most natural approach is localization. Assume that $X$ is a discrete set of points in $\mathbb{E}^3$ such that $C_3+X$ is a translative covering of $\mathbb{E}^3$. Let $P$ be the parallelohedron defined in (\ref{defineP}). Then define
\begin{align}\label{eq:CXP}
\theta(C_3,X,P)=\frac{\sum\limits_{\mathbf{x}\in X}vol(P\cap(C_3+\mathbf{x}))}{vol(P)}.
\end{align}
Let $\mathfrak{X}$ denote the family of all such sets $X$. We call
\begin{align}\label{eq:CP}
\theta(C_3,P)=\min_{X\in\mathfrak{X}}\theta(C_3,X,P)
\end{align}
the covering density of $C_3$ for $P$. Since $P$ is a parallelohedron, clearly,
\begin{align}\label{eq:C}
\theta^t(C_3)\geq\theta(C_3,P).
\end{align}

\begin{proof}[\textbf{Proof of Theorem \ref{density}}]
Since $C_3+X$ is a translative covering of $\mathbb{E}^3$, there must exist a subset $X'\subset X$ such that $C_3+X'$ is a covering of $P$. Assume that $\mathbf{o}\in C_3+\mathbf{x}_{m+1}$, where $\mathbf{x}_{m+1}\in X'$. Then denote 
\begin{align}
X''=\{\mathbf{x}_i\in X': (C_3+\mathbf{x}_{m+1})\cap (C_3+\mathbf{x}_i)\neq \varnothing\}.\nonumber
\end{align}
Without loss of generality, we suppose that 
\begin{align}
X''=\{\mathbf{x}_1, \ldots, \mathbf{x}_{m+1}\}.\nonumber
\end{align}
By (\ref{relation}), we have
\begin{align}\label{contain}
\bigcup\limits_{i=1}^{m+1}(C_3+\mathbf{x}_i)\subset 3C_3+ \mathbf{x}_{m+1}\subset 4C_3 \subset P.
\end{align}
Now, we estimate $\theta(C_3,X,P)$ by considering two cases.\\

\textbf{Case 1. $m\geq 27$.} By (\ref{vol(P)}), (\ref{eq:CXP}) and (\ref{contain}),
\begin{align}
\theta(C_3,X,P)&=\frac{\sum\limits_{\mathbf{x}\in X}vol\left(P\cap(C_3+\mathbf{x})\right)}{vol(P)} \nonumber \\
&\geq\frac{vol(P\setminus (3C_3+\mathbf{x}_{m+1}))+\sum\limits_{\mathbf{x}\in X}vol\left((3C_3+\mathbf{x}_{m+1})\cap(C_3+\mathbf{x})\right)}{vol(P)} \nonumber \\
&\geq1-\frac{9}{32}+\frac{\sum\limits_{i=1}^{m+1}vol\left((3C_3+\mathbf{x}_{m+1})\cap(C_3+\mathbf{x}_i)\right)}{vol(P)} \nonumber \\
&=\frac{23}{32}+(m+1)\cdot\frac{vol(C_3)}{vol(P)} \nonumber \\
&\geq1+ \frac{1}{96}. \nonumber
\end{align}\\

\textbf{Case 2. $m\leq 26$.} Let $\partial(\cdot)$ denote the boundary of a convex body. Then
\begin{align}\label{boundary}
\partial(C_3+\mathbf{x}_{m+1})=\bigcup_{i=1}^{m}\left(\partial(C_3+\mathbf{x}_{m+1})\cap(C_3+\mathbf{x}_i)\right).
\end{align}
Assume that $R_{z_0}^{(i)}=H_{z_0}\cap (C_3+\mathbf{x}_i), i=1, \ldots, m+1$, then by Lemma \ref{homothetic}, $R_{z_0}^{(i)}$ are squares with edges length less than or equal to $2\sqrt{2}$ (including zero).

Firstly, let $l_{z_0}^{(i)}$ denote the length of the edges of $R_{z_0}^{(i)}$, where $1\leq i\leq m+1$. 
It can be checked that when $l_{z_0}^{(i)}\leq \frac{\sqrt{2}}{27}$ or $l_{z_0}^{(i)}\geq \frac{53\sqrt{2}}{27}$, the length of interval where $z_0$ can satisfy $R_{z_0}^{(i)}$ existing is $\frac{4}{27}$. Combining $m+1\leq 27$, we have
\begin{align}
(m+1)\times \frac{4}{27}\leq 4.\nonumber
\end{align}
Let $\mathbf{x}_i=(x_i, y_i, z_i)$, where $1\leq i\leq m+1$. Therefore, there exists a $z_0\in(-2+z_{m+1}, 2+z_{m+1})$ such that  $R_{z_0}^{(m+1)}$ is a nonempty set and $R_{z_0}^{(i)}$ are squares with  $\frac{\sqrt{2}}{27}\leq l_{z_0}^{(i)}\leq \frac{53\sqrt{2}}{27}$, where $1\leq i\leq m+1$.

Next, by (\ref{boundary}), the four edges of $R_{z_0}^{(m+1)}$ are covered by $\bigcup_{k=1}^{m} R_{z_0}^{(k)}$. Let $A, B, C, D$ be the four vertices of $R_{z_0}^{(m+1)}$ and let $E, F, G, H$  be the four vertices of $R_{z_0}^{(k)}$. Based on the number of vertices of $R_{z_0}^{(m+1)}$ covered by $R_{z_0}^{(k)}$, there are three possible covering configurations as shown in Fig. \ref{fig 1}.
\begin{figure}
\centering
\includegraphics[height=3.3cm]{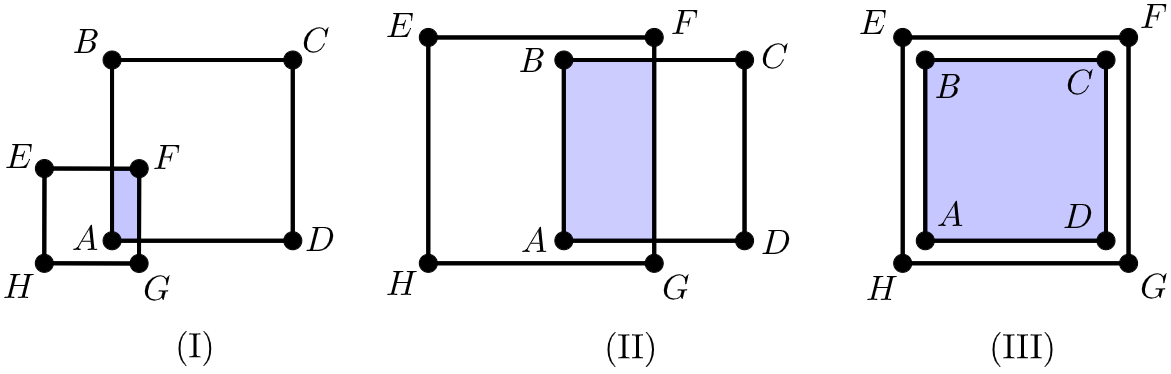}
\caption{Possible relative situation between $R_{z_0}^{(m+1)}$ and $R_{z_0}^{(i)}$.}
\label{fig 1}
\end{figure}

In configuration (I), let $a$ be the distance between $AD$ and $EF$ and let $b$ be the distance between $AB$ and $FG$. Without loss of generality, we assume that
\begin{align}
0\leq b \leq a \leq l_{z_0}^{(i)} \leq l_{z_0}^{(m+1)}.\nonumber
\end{align}
In configuration (II), let $c$ be the distance between $BC$ and $EF$, let $d$ be the distance between $AD$ and $HG$, let $e$ be the distance between $AB$ and $FG$. Without loss of generality, we assume that
\begin{align}\nonumber
 c\leq d.
\end{align}
In configuration (III), denote the distance between $AD$ and $HG$, $AB$ and $HE$, $CD$ and $FG$, $BC$ and $EF$ with $g, h, r, s$, respectively. Without loss of generality, we assume that
\begin{align}
g\leq h \leq r \leq s.\nonumber
\end{align}
Let $\mathbf{x}_i=(x_i, y_i, z_i)$, $1\leq i\leq m+1$. For a fixed $z_0\in (-2+z_{m+1}, 2+z_{m+1})$, define

\begin{figure}
\centering
\includegraphics[height=4.8cm]{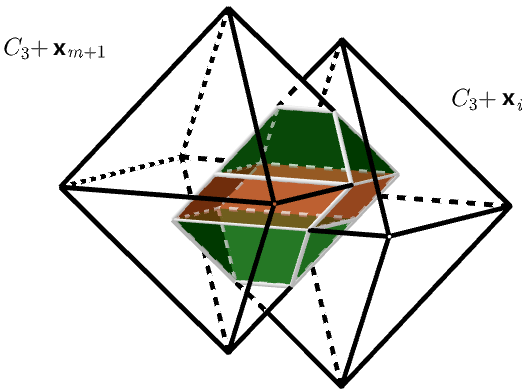}
\caption{Configuration (I) in subcase 1.}
\label{fig 2}
\end{figure}

\begin{equation}
\delta(R_{z_0}^{(i)})=\left\{
\begin{array}{rcl}
 +1,& &\text{$z_0\in [-2+z_i,z_i]$},\\\nonumber
 -1,& &\text{$z_0\in (z_i,2+z_i]$.}\nonumber
\end{array}\right.
\end{equation}
We consider two subcases.\\
\textbf{Subcase 1. $\delta(R_{z_0}^{(m+1)})\cdot\delta(R_{z_0}^{(i)})=1$.}

Without loss of generality, assume that $\delta(R_{z_0}^{(m+1)})=1, \delta(R_{z_0}^{(i)})=1$. Take configuration (I) as an example, as shown in Fig. \ref{fig 2}, we have
\begin{align}\label{eq:con1}
vol((C_3+{\mathbf{x}_{m+1}})\cap(C_3+{\mathbf{x}_i}))
&=\int_{-2+z_{m+1}}^{2+z_{m+1}}S(R_{z}^{(m+1)}\cap R_{z}^{(i)})dz \nonumber \\
&\geq\int_{z_0-{b\over\sqrt{2}}}^{z_0+{{2\sqrt{2}-l_{z_0}^{(m+1)}}\over\sqrt{2}}}S(R_{z}^{(m+1)}\cap R_{z}^{(i)})dz \nonumber\\
&+\int_{z_0+{{2\sqrt{2}-l_{z_0}^{(i)}}\over\sqrt{2}}}^{z_0+{{4\sqrt{2}-l_{z_0}^{(m+1)}-l_{z_0}^{(i)}+b}\over\sqrt{2}}}S(R_{z}^{(m+1)}\cap R_{z}^{(i)})dz \nonumber \\
%&=2\left(\left(\frac{\sqrt{2}}{4}a-\frac{\sqrt{2}}{12}b-\frac{\sqrt{2}}{6}l_z^{m+1}+\frac{2}{3}\right)(b+2\sqrt{2}-l_z^{m+1})^2\right) \nonumber \\
%&+\left(\frac{l_z^{m+1}-l_z^{i}}{\sqrt{2}}(b+2\sqrt{2}-l_z^{m+1})(a+2\sqrt{2}-l_z^{m+1})\right) \nonumber \\
&\geq 2\times{1\over3\sqrt{2}}(b+2\sqrt{2}-l_{z_0}^{(m+1)})^{3} \nonumber \\
&\geq \frac{\sqrt{2}}{3}\left(2\sqrt{2}-\frac{53\sqrt{2}}{27}\right)^{3} \nonumber \\
&=\frac{4}{3^{10}},
\end{align}
where $S(\cdot)$ denotes the area of a two dimensional convex set.

For configuration (II), we have
\begin{align}
vol((C_3+{\mathbf{x}_{m+1}})\cap(C_3+{\mathbf{x}_i}))
&=\int_{-2+z_{m+1}}^{2+z_{m+1}}S(R_{z}^{(m+1)}\cap R_{z}^{(i)})dz \nonumber \\
&\geq \int_{z_0-{{e\over\sqrt{2}}}}^{z_0+{{{2\sqrt{2}-l_{z_0}^{(i)}}\over\sqrt{2}}}}S(R_{z}^{(m+1)}\cap R_{z}^{(i)})dz \nonumber \\
&+ \int_{z_0+{{{2\sqrt{2}-l_{z_0}^{(m+1)}}\over\sqrt{2}}}}^{z_0+{{{4\sqrt{2}-l_{z_0}^{(m+1)}-l_{z_0}^{(i)}+e}\over\sqrt{2}}}}S(R_{z}^{(m+1)}\cap R_{z}^{(i)})dz \nonumber \\
%&=[{1\over6\sqrt{2}}(2\sqrt{2}-l_2+t)^{2}(l_1-t)+{1\over3\sqrt{2}}(2\sqrt{2}-l_2+t)^{2}(2\sqrt{2}-l_2+l_1)]\\
%&+ [{{r}\over\sqrt{2}}(2\sqrt{2}-l_2+t)(2\sqrt{2}-l_2+l_1)+{\sqrt{2}\over2}(2\sqrt{2}-l_2+t)({r\over\sqrt{2}})^{2}]\\
%&+ [{{s-r}\over\sqrt{2}}(2\sqrt{2}-l_2+t)(2\sqrt{2}-l_2+l_1+r)]\\
%&+ [{{r}\over\sqrt{2}}(2\sqrt{2}-l_2+t)(2\sqrt{2}-l_2+l_1)+{\sqrt{2}\over2}(2\sqrt{2}-l_2+t)({r\over\sqrt{2}})^{2}]\\
%&+[{1\over6\sqrt{2}}(2\sqrt{2}-l_2+t)^{2}(l_1-t)+{1\over3\sqrt{2}}(2\sqrt{2}-l_2+t)^{2}(2\sqrt{2}-l_2+l_1)]\\
&\geq 2\times{1\over3\sqrt{2}}(2\sqrt{2}-l_{z_0}^{(i)}+e)^{2}(2\sqrt{2}-l_{z_0}^{(i)}+l_{z_0}^{(m+1)}) \nonumber \\
&\geq  {\sqrt{2}\over3}({1\over27}\sqrt{2})^{3} \nonumber \\
&= {4\over3^{10}}. \nonumber \\
\end{align}

For configuration (III), we have
\begin{align}
vol((C_3+{\mathbf{x}_{m+1}})\cap(C_3+{\mathbf{x}_i}))
&=\int_{-2+z_{m+1}}^{2+z_{m+1}}S(R_{z}^{(m+1)}\cap R_{z}^{(i)})dz \nonumber \\
&\geq\int_{z_0-{{l_{z_0}^{(m+1)}\over\sqrt{2}}}}^{z_0+{{{2\sqrt{2}-l_{z_0}^{(i)}+g}\over\sqrt{2}}}}S(R_{z}^{(m+1)}\cap R_{z}^{(i)})dz \nonumber \\
&+\int_{z_0+{{{2\sqrt{2}-l_{z_0}^{(i)}+s}\over\sqrt{2}}}}^{z_0+{{{4\sqrt{2}-l_{z_0}^{(i)}}\over\sqrt{2}}}}S(R_{z}^{(m+1)}\cap R_{z}^{(i)})dz \nonumber \\
&=  2\times{1\over3\sqrt{2}}(g+2\sqrt{2}-l_{z_0}^{(i)}+l_{z_0}^{(m+1)})^{3}\nonumber \\
&\geq  \frac{\sqrt{2}}{3}(2\sqrt{2}-\frac{53\sqrt{2}}{27})^{3}\nonumber \\
&=\frac{4}{3^{10}}.\nonumber \\
\end{align}

\begin{rem}
The configurations (I), (II), and (III) in Fig. \ref{fig 1} indicate that $R_{z_0}^{(k)}$ covers 1, 2, and 3 (4) vertices of $R_{z_0}^{(m+1)}$, respectively.  In fact, the configuration is equivalent to (II) when $R_{z_0}^{(k)}$ does not cover any vertex of $R_{z_0}^{(m+1)}$.
\end{rem}
\textbf{Subcase 2. $\delta(R_{z_0}^{(m+1)})\cdot\delta(R_{z_0}^{(i)})=-1$.}

Based on the number of vertices of $R_{z_0}^{(m+1)}$ covered by $R_{z_0}^{(k)}$, there are three possible covering configurations as shown in Fig. \ref{fig 1}.

Without loss of generality, assume that $\delta(R_{z_0}^{(m+1)})=1$, $\delta(R_{z_0}^{(i)})=-1$. In configuration (I) and (II), by the covering structure, it is obvious to see that the point of $AD$ intersected with $FG$ must be covered by another $R_{z_0}^{(j)}$, where $j\in\{1,2,\ldots,m\}\setminus \{i\}$. Since one of $\delta(R_{z_0}^{(m+1)})$ and $\delta(R_{z_0}^{(i)})$ must have the same sign with $\delta(R_{z_0}^{(j)})$, by the result of Subcase 1, we have
\begin{align}
vol((C_3+{\mathbf{x}_{m+1}})\cap (C_3+{\mathbf{x}_j}))+vol((C_3+{\mathbf{x}_i})\cap (C_3+{\mathbf{x}_j}))\geq {4\over3^{10}}.
\end{align}
For configuration (III), we have
\begin{align}\label{eq:con3}
vol((C_3+{\mathbf{x}_{m+1}})\cap(C_3+{\mathbf{x}_i}))
&=\int_{-2+z_{m+1}}^{2+z_{m+1}}S(R_{z}^{(m+1)}\cap R_{z}^{(i)})dz \nonumber \\
&\geq \int_{z_0-{{l_{z_0}^{(m+1)}\over\sqrt{2}}}}^{z_0+{{g\over\sqrt{2}}}}S(R_{z}^{(m+1)}\cap R_{z}^{(i)})dz \nonumber \\
&+\int_{z_0+{{s\over\sqrt{2}}}}^{z_0+{{l_{z_0}^{(i)}\over\sqrt{2}}}}S(R_{z}^{(m+1)}\cap R_{z}^{(i)})dz\nonumber \\
&= 2\times{1\over3\sqrt{2}}(g+l_{z_0}^{(m+1)})^{3}\nonumber \\
&\geq {\sqrt{2}\over3}({1\over27}\sqrt{2})^{3}\nonumber \\
&={4\over3^{10}}.\nonumber \\
\end{align}

Therefore, by the conclusions of (\ref{eq:CXP}) and (\ref{eq:con1})-(\ref{eq:con3}), we have
\begin{align}
\theta(C_3,X,P)&=\frac{\sum\limits_{\mathbf{x}\in X}vol\left(P\cap(C_3+\mathbf{x})\right)}{vol(P)}\nonumber \\
&=1+\frac{\sum\limits_{\mathbf{x}_i,\mathbf{x}_j\in X',i<j}vol\left(P\cap(C_3+\mathbf{x}_i)\cap(C_3+\mathbf{x}_j)\right)}{vol(P)} \nonumber \\
&\geq 1+\frac{\sum\limits_{1\leq i<j\leq m+1}vol\left((C_3+\mathbf{x}_i)\cap(C_3+\mathbf{x}_j)\right)}{vol(P)}\nonumber \\
&\geq1+\frac{1}{1024}\cdot \frac{4}{3^{10}}\nonumber \\
&= 1+\frac{4}{6^{10}}. \nonumber
\end{align}

As a conclusion of these two cases, we have
\begin{align}
\theta(C_3,X,P)\geq 1+\frac{4}{6^{10}}>1+6.6\times10^{-8}. \nonumber
\end{align}
According to equations (\ref{eq:CXP}), (\ref{eq:CP}) and (\ref{eq:C}), we obtain
\begin{align}
\theta^t(C_3)\geq \theta(C_3,P)=\min_{X\in\mathfrak{X}}\theta(C_3,X,P)\geq 1+\frac{4}{6^{10}}>1+6.6\times10^{-8}, \nonumber
\end{align}
and the theorem is proved.
\end{proof}

\begin{rem}
By covering the structure with asymptotic $\delta^l(C_3)=\frac{18}{19}$ instead of $\frac{vol(C_3)}{vol(P)}=\frac{2}{3}$, we can slightly improve the lower bound in Theorem \ref{density}. However, since the improvement is not essential, its proof is not include here.
\end{rem}
\section*{Acknowledgement}
\quad \quad This work is supported by the National Natural Science Foundation of China (NSFC12226006, NSFC11921001, NSFC11801410 and NSFC11971346) and the Natural Key Research and Development Program of China (2018YFA0704701) and the China Scholarship Council. The authors are grateful to Professor C. Zong for his supervision and discussion.

\end{document}